\algrenewcommand\algorithmicindent{1em}
\newcommand{\norm}[1]{\left\|#1\right\|}
\newcommand{\F}{\mathsf{F}}
\newcommand{\G}{\mathsf{G}^{\text{\tiny{n}}}}
\newcommand{\A}{\mathsf{A}}
\newcommand{\R}{\mathbb{R}}
\newcommand{\PP}{\mathcal{P}}
\newcommand{\abs}[1]{\left|#1\right|}
\renewcommand{\P}{\tt Poinc}
\newtheorem{theorem}{Theorem}[section]
\theoremstyle{definition}
\newtheorem{lemma}[theorem]{Lemma}
\newtheorem{example}[theorem]{Example}
\newtheorem{remark}[theorem]{Remark}
\title[Linearized Continuous Galerkin \emph{\MakeLowercase{\textbf{hp}}}-FEM]
{Linearized Continuous Galerkin \emph{\MakeLowercase{\textbf{hp}}}-FEM Applied to Nonlinear Initial Value Problems}
\author[M.~Amrein]{Mario Amrein}
\address{Applied University of Zurich, CH-8401 Switzerland}
\email{mario.amrein@zhaw.ch}
\begin{document}
\normalem
\begin{abstract}
In this note we consider the continuous Galerkin time stepping method of arbitrary order as a possible discretization scheme of nonlinear initial value problems. In addition, we develop and generalize a well known existing result for the discrete solution by applying a general linearizing procedure to the nonlinear discrete scheme including also the \emph{simplified} Newton solution procedure. In particular, the presented existence results are implied by choosing sufficient small time steps locally. Furthermore, the established existence results are independent of the local approximation order. Moreover, we will see that the proposed solution scheme is able to significantly reduce the number of iterations. Finally, based on existing and well known \emph{a priori} error estimates for the discrete solution, we present some numerical experiments that highlight the proposed results of this note.
\end{abstract}

\keywords{Nonlinear initial value problems, continuous Galerkin method, linearized time-stepping procedure, high-order methods, well posedness of continuous Galerkin solutions.}

\subjclass[2010]{65L05, 65L60, 65J15}

\maketitle

\section{Introduction}
In this note we consider the continuous Galerkin (cG) method of arbitrary order applied to the---possibly---nonlinear initial value problem given by 
\begin{equation}
\label{eq:initial}
\begin{cases}
\dot{u}(t)&=\F(t,u(t)),\\
u(0)&=u_{0}.
\end{cases}
\end{equation}
Here, for a final time $T\in (0,\infty)$, $u:(0,T)\rightarrow \mathbb{R}^{d}, d\in \mathbb{N}_{\geq 1}$ signifies the unknown solution and $u_{0}$ is the initial data that determines $u$ at time $t=0$. In addition, $\F:[0,T]\times\mathbb{R}^{d}\rightarrow \mathbb{R}^{d}$ is a---possibly---nonlinear function. It is well known that for $\F$ being continuous, the local existence of a solution is implied by Peano's Theorem (see, e.g., \cite{markley2004principles}). Moreover, in case of $\F$ being---locally---Lipschitz, the solution is even unique by the Theorem of Picard and Lindel\"of (see again, e.g., \cite{markley2004principles}). In general, problem \eqref{eq:initial} can only be solved approximately, i.e. numerically. Such a numerical solution scheme relies mainly on two different procedures which we address in this note: Firstly, the problem at hand needs to be discretized over some finite dimensional subspaces of the solution space. This leads to a series of nonlinear systems that need to be solved again numerically. Thus in a second step, the numerical solution procedure of the nonlinear systems can again only be solved approximately using a---suitable---linearization scheme applied to the nonlinear systems. Typically, such a linearization scheme is given by Banach's fixed point iteration procedure---also termed \emph{Picard} iteration---. Here we recall that---in case of $\F$ being Lipschitz continuous---the proof of the local well posedness of problem \eqref{eq:initial}---by the famous result of Picard and Lindel\"of---is constructive and relies mainly on Banach's fixed point theorem. It is noteworthy that this result can also be achieved constructively using more general iteration schemes. Indeed, with the aim of proving local well posedness of \eqref{eq:initial}, in \cite{deuflhard2011newton} for example, the standard \emph{Picard} iteration procedure is replaced by the \emph{simplified Newton} iteration. Of course in this case, the assumption of $\F$ being Lipschitz continuous needs to be replaced by stronger assumptions, mainly on the derivative of $\F$. However, from a computational point of view a benefit of such an approach that uses more advanced iteration schemes---as for example general \emph{Newton-type} iteration schemes---, should be given by a lower number of iteration steps within the applied numerical solution procedure. 

In this note, the underlying discretization scheme for the approximation of \eqref{eq:initial} is given by a continuous Galerkin time stepping method of arbitrary order. The idea of such an approximation scheme relies essentially on a weak formulation of \eqref{eq:initial}. Subsequently, this weak formulation will be restricted on some finite dimensional subspaces in order to end up with a discretization of \eqref{eq:initial}. In particular, since the test space of the weak formulation can be chosen of polynomials that are discontinuous at the nodal points, this discretization scheme can be interpreted as an \emph{implicit} one-step scheme, see e.g. \cite{Wihler:05,Holm:2018,bangerth2013adaptive,thomee1984galerkin} for further details. Thus, starting from the initial data $u_{0}$, each time step implies a nonlinear system that needs to be solved iteratively. Thus, if the underlying continuous problem \eqref{eq:initial} is well posed for some $T>0$, it is reasonable that a suitable iterative solution procedure---resolving the nonlinearity---is itself well posed as long as the time steps are sufficiently small. Indeed, as we will see, the proposed analysis for the well posedness of the discrete version of \eqref{eq:initial} implied by the continuous Galerkin methodology depends solely on the \emph{local} time steps and is independent of the local approximation order, i.e. the \emph{local} polynomial degree of the solution space.

\subsubsection*{Outline.} The outline of this work is as follows: In Section \ref{sec:2} we present the continuous Galerkin (cG) time stepping scheme used for the discretization of the underlying initial value problem \eqref{eq:initial}. Subsequently, we introduce the proposed iterative linearization scheme used for the solution of the nonlinear discrete system. The purpose of Section \ref{sec:3} is the well posedness of the discretized and now linearized problem. This will be accomplished by our main result given in Theorem \ref{theo:main}. Since the proof of this result relies on a fixed point iteration argument, we end up with an iterative solution procedure that can be tested on some numerical experiments in Section \ref{sec:4}. We further discuss the $h$ and $p$ version of the cG methodology and compare the number of iterations between the standard Banach fixed point iteration procedure and the \emph{simplified} Newton iteration scheme. Finally, we summarize and comment on our findings in Section \ref{sec:concl}. 	

\subsubsection*{Notation.} Throughout this article $(\cdot,\cdot)$ is the Euclidean product of $\R^{d}, d\geq 1$ with the induced norm $\norm{x}=\sqrt{(x,x)}$. For an interval $I=(a,b)$ we denote by $L^2(I;\R^d)$ the usual space of square integrable functions on $I$ with values in $\R^{d}$ and norm $\norm{\cdot}_{L^{2}(I;\R^{d})}$. The set $ L^{\infty}(I;\R^{d})$ is the usual space of bounded functions with norm $\norm{x}_{L^{\infty}(I;\R^d)}=\text{ess}\sup_{t\in I}{\norm{x(t)}}$. In addition, let $H^{s}(I;\mathbb{R}^{d}), s\in \mathbb{N}$ be the standard Sobolev space with corresponding norm $\norm{\cdot}_{H^{s}(I;\R^{d})}$. For a Banach space $X$ we signify by $X'$ the dual of $X$. In addition, $\left\langle \cdot ,\cdot \right\rangle $ will be used for the dual pairing in $X'\times X$, i.e. the value of $x'\in X'$ at the point $x\in X$. We further assume that the right side $\F$ in \eqref{eq:initial} is Lipschitz continuous with respect to the second variable, i.e.
\begin{equation}
\label{eq:Lipschitz}
\norm{\F(t,u)-\F(t,w)}\leq L\norm{u-w}, \quad u,w\in\R^d, \quad t\in [0,T].
\end{equation}

\section*{The \upshape{\textbf{hp}}-\scshape{cG Time Stepping Methodology}}
\label{sec:2}
Let $0=t_{0}<t_{1}<t_{2}<\ldots <t_{N}=T$ be a partition $\tau_{N}=\{I_{n}\}_{n=1}^{N}$ of the interval $I=[0,T]$ into $N$ open sub-intervals $I_{n}=(t_{n-1},t_{n})$. By $k_{n}=t_{n}-t_{n-1}$ we denote the local time steps with $k=\max_{n\in \{1,\ldots,N\}}\{k_{n}\}$ and $r_{n}\in \mathbb{N}_{\geq 0}$ represents the local polynomial degree, i.e. the local approximation order. Furthermore, the space of all polynomials of degree $r\in \mathbb{N}_{\geq 0}$ will be given by
\[
\PP^{r}(I;\R^{d})=\left\{p\in C^{0}(I;\mathbb{R}^{d})|p(t)=\sum_{j=0}^{r}{a_{j}x^{j}}, a_{j}\in \R^{d} \right\}.
\]
We further introduce the vector $\textbf{r}=\{r_{n}\}_{n=1}^{N}$ in order to allocate the local approximation orders. The following approximation 
\[
\mathcal{S}^{\textbf{r},1}(\tau_{N};\mathbb{R}^{d})=\{x\in H^{1}(I,\mathbb{R}^{d})|x_{|I_{n}}\in \PP^{r_{n}}(I_{n};\mathbb{R}^{d}),1\leq n\leq N\}
\]
and
\[
\mathcal{S}^{\textbf{r}-1,0}(\tau_{N};\mathbb{R}^{d})=\{x\in L^{2}(I,\mathbb{R}^{d})|x_{|I_{n}}\in \PP^{r_{n}-1}(I_{n};\mathbb{R}^{d}),1\leq n\leq N\},
\]
test spaces will be used in the sequel. Notice that since the approximation space consists of continuous elements we need to choose a discontinuous test space (as can be seen by the fact that the order of the test space is $\textbf{r}-1$).

\subsection{Discretization.}
The $hp$-cG time stepping scheme---first introduced in \cite{Hulme:72a,Hulme:72,Estep:95,EstepFrench:94}---now reads as follows: Find $\mathcal{U}\in \mathcal{S}^{\textbf{r},1}(\tau_{N};\mathbb{R}^{d})$ such that there holds
\begin{equation}
\label{eq:2}
\begin{aligned}
\sum_{n=1}^{N}{\int_{I_{n}}}{(\dot{\mathcal{U}}(t),\varphi(t))\mathrm{d} t}&=\sum_{n=1}^{N}{\int_{I_{n}}{(\F(t,\mathcal{U}(t)),\varphi(t))}\mathrm{d} t}, \quad \forall \varphi \in \mathcal{S}^{\textbf{r}-1,}(\tau_{N};\mathbb{R}^{d}),\\
\mathcal{U}(0)&=u_{0}.
\end{aligned}
\end{equation}
It is noteworthy, that the discontinuous character of $\mathcal{S}^{r-1,0}(\tau_{N},\mathbb{R}^{d})$ allows us to choose discontinuous elements $\varphi \in \mathcal{S}^{\textbf{r}-1,1}(\tau_{N};\mathbb{R}^{d})$ and therefore problem \eqref{eq:2} decouples on each subinterval $I_{n}\subset I$ into
\begin{equation}
\label{eq:3}
\begin{aligned}
\int_{I_{n}}{(\dot{\mathcal{U}}(t),\varphi(t))\mathrm{d}t}&=\int_{I_{n}}{(\F(t,\mathcal{U}(t)),\varphi(t))\mathrm{d}t}, \quad \forall \varphi \in \PP^{r_{n}-1}(I_{n};\R^{d}),\\
\mathcal{U}|_{I_{n}}(t_{n-1})&=\mathcal{U}|_{I_{n-1}}(t_{n-1}),
\end{aligned}
\end{equation}
with $\mathcal{U}(0)=u_{0}$, i.e. the discretization can be interpreted as an implicit one-step scheme.

\subsection{Linearization}
For $u\in \PP^{r_{n}}(I_{n};\R^{d}) $ and $t\in (0,T]$ we introduce the operator $\G(t,u) \in \PP^{r_n-1}(I_{n};\R^d)'$ by 
\[
\left\langle \G(t,u)^{n},\varphi \right\rangle:=\int_{I_{n}}{(\dot{u}(t),\varphi(t))\mathrm{d} t}-\int_{I_{n}}{(\F(t,u(t)),\varphi(t))\mathrm{d} t}.
\]
In addition, let $\A:[0,T]\times \mathbb{R}^{d} \rightarrow \R^{d\times d}$ denote a map such that for $u\in\R^d$, there exists a uniform constant $C_{\A}$ with
\begin{equation}
\label{eq:CA}
\sup_{t\in [0,T]}{\norm{\A(t,u)x}}\leq C_{\A}\norm{x}, \quad x\in\R^d,
\end{equation}
and define further the operator 
\[\G_{\A}(t,u):\PP^{r_{n}}(I_{n};\R^{d})\rightarrow \PP^{r_{n}-1}(I_{n};\R^{d})'\] by
\[
\left\langle \G_{\A}(t,u)\delta,\varphi \right\rangle:=\int_{I_{n}}{(\dot{\delta}(t),\varphi(t))\mathrm{d} t}-\int_{I_{n}}{(\A(t,u(t))\delta(t),\varphi(t))\mathrm{d} t}.
\] 
Notice that if $\F$ is Fr\'echet differentiable in $u$ with derivative $\F_{u}$ with respect to $u$, then the G\^ateaux derivative in direction $\delta $ is given by
\[
\left\langle \G_{\F_{u}}(t,u)\delta,\varphi \right\rangle:=\int_{I_{n}}{(\dot{\delta}(t),\varphi(t))\mathrm{d} t}-\int_{I_{n}}{(\F_u(t,u(t))\delta(t),\varphi(t))\mathrm{d} t}.
\] 
Based on these definitions we introduce the following linearized \emph{hp}-cG iteration scheme at time node $t_{n}$: 
For $ j=0,1,2,\ldots$ and given $\mathcal{U}_{0}^n:=\mathcal{U}(t_{n-1})$ solve
\begin{equation}
-\left\langle \G_{\A}(t,\mathcal{U}_{j}^n)\delta_{j}^n,\varphi \right\rangle=\left\langle \G(t,\mathcal{U}_{j}^n),\varphi \right\rangle,
\end{equation}
for $\delta_{j}^{n}$ and compute the update
\[
\mathcal{U}_{j+1}^n=\mathcal{U}_{j}^n+\delta_{j}^n, \quad j\geq 0.
\]
In this note we freeze the second variable $\mathcal{U}_{j}^n$ in the operator $\G_{\A}(t,\mathcal{U}_{j}^n)$, i.e. we set $\G_{\A}(t,\mathcal{U}_{j}^n)\equiv \G_{\A}(t,\mathcal{U}_{0}^n)$ forall $j\geq 0$. This implies the following \emph{simplified} linearized \emph{hp}-cG iteration scheme at time node $t_{n}$: 

For $ j=0,1,2,\ldots$ and given $\mathcal{U}_{0}^n:=\mathcal{U}(t_{n-1})$ solve
\begin{equation}
\label{eq:linearized}
-\left\langle \G_{\A}(t,\mathcal{U}_{0}^n)\delta_{j}^n,\varphi \right\rangle=\left\langle \G(t,\mathcal{U}_{j}^n),\varphi \right\rangle,
\end{equation}
for $\delta_{j}^{n}$ and compute the update
\[
\mathcal{U}_{j+1}^n=\mathcal{U}_{j}^n+\delta_{j}^n, \quad j\geq 0.
\]
In case of $\A=\F_{u}$ the above iteration procedure \eqref{eq:linearized} can be interpreted as a \emph{simplified} Newton iteration scheme.


\section*{Convergence of the simplified linearized \upshape{\textbf{hp}}-\scshape{cG iteration scheme}}
\label{sec:3}

The aim of this Section is to show the existence of a solution for \eqref{eq:3}. Our strategy is to show that for sufficiently small time steps, the iteration scheme \eqref{eq:linearized} is well defined and convergent. Before we start, we need to collect some auxiliary results.

\begin{lemma}[Poincar\'e inequality]
Let $u\in H^{1}(I;\mathbb{R}^{n})$ with $u(a)=0\in\R^{d}$ and $I=(a,b) \subset \R, -\infty < a<b < \infty$. Then there holds the Poincar\'e inequality
\begin{equation}
\label{eq:poincare}
\norm{u}_{L^{2}(I;\R^d)}\leq kC_{\P}\norm{\dot{u}}_{L^{2}(I;\R^{n})},
\end{equation}
with $k=b-a$. The constant $C_{\P}>0$ is independent of $k$ and $u$.
\end{lemma}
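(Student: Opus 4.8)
The plan is to prove the Poincaré inequality stated in \eqref{eq:poincare} by exploiting the fundamental theorem of calculus together with the vanishing boundary condition $u(a)=0$, and then applying the Cauchy--Schwarz inequality to bound the resulting pointwise estimate. The key observation is that for $u\in H^{1}(I;\R^d)$ with $u(a)=0$, every point value can be recovered by integrating the derivative, namely $u(t)=\int_{a}^{t}\dot{u}(s)\,\mathrm{d}s$ for $t\in I$. This representation is the engine of the whole argument, and the dimension $d$ plays no essential role since the Euclidean norm and the integral commute appropriately.

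First I would fix $t\in I$ and estimate the pointwise Euclidean norm $\norm{u(t)}$ using the integral representation: by the triangle inequality for integrals and then Cauchy--Schwarz on $(a,t)$, one obtains
\[
\norm{u(t)}=\norm{\int_{a}^{t}\dot{u}(s)\,\mathrm{d}s}\leq \int_{a}^{t}\norm{\dot{u}(s)}\,\mathrm{d}s \leq (t-a)^{1/2}\left(\int_{a}^{t}\norm{\dot{u}(s)}^{2}\,\mathrm{d}s\right)^{1/2}.
\]
Since $t-a\leq b-a=k$ and the inner integral is bounded by $\norm{\dot{u}}_{L^{2}(I;\R^d)}^{2}$, this yields the uniform pointwise bound $\norm{u(t)}^{2}\leq k\,\norm{\dot{u}}_{L^{2}(I;\R^d)}^{2}$.

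Next I would integrate this pointwise bound over the interval $I$ to recover the $L^2$ norm on the left-hand side. Integrating $\norm{u(t)}^2$ over $I$ and using the pointwise estimate gives
\[
\norm{u}_{L^{2}(I;\R^d)}^{2}=\int_{a}^{b}\norm{u(t)}^{2}\,\mathrm{d}t\leq \int_{a}^{b} k\,\norm{\dot{u}}_{L^{2}(I;\R^d)}^{2}\,\mathrm{d}t = k^{2}\,\norm{\dot{u}}_{L^{2}(I;\R^d)}^{2}.
\]
Taking square roots produces $\norm{u}_{L^{2}(I;\R^d)}\leq k\,\norm{\dot{u}}_{L^{2}(I;\R^d)}$, which is exactly the claimed inequality with the explicit constant $C_{\P}=1$; in particular the constant is independent of $k$ and of $u$, as asserted.

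I do not expect a genuine obstacle here, as the estimate is classical and the scaling in $k$ is transparent from the two applications of Cauchy--Schwarz. The only point requiring a little care is the justification of the integral representation $u(t)=\int_{a}^{t}\dot{u}(s)\,\mathrm{d}s$: this relies on the Sobolev embedding $H^{1}(I;\R^d)\hookrightarrow C^{0}(\overline{I};\R^d)$ in one space dimension, which guarantees that $u$ has a continuous representative for which the boundary value $u(a)=0$ is meaningful and the fundamental theorem of calculus applies. One should also note that the stated sharp constant is $C_{\P}=1$, so any larger constant suffices; a more refined argument via a spectral (eigenvalue) characterization would yield the optimal $C_{\P}=1/\pi$, but this sharpening is unnecessary for the purposes of the subsequent well-posedness analysis.
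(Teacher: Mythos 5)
Your proof is correct, but it takes a different (and more informative) route than the paper: the paper does not actually prove the lemma at all, it simply invokes the standard Poincar\'e inequality from the references (Braess, Evans) as a known fact. Your argument is self-contained --- integral representation $u(t)=\int_a^t\dot u(s)\,\mathrm{d}s$ from the vanishing trace at $a$, Cauchy--Schwarz pointwise, then integration over $I$ --- and it buys something the citation does not: the explicit admissible constant $C_{\P}=1$ together with a transparent explanation of why the factor $k$ scales linearly with the interval length and why $C_{\P}$ is independent of $k$ and $u$. That is genuinely useful here, because the smallness condition $k<\left((2C_{\A}+L)C_{\P}\right)^{-1}$ in the main theorem is only quantitative once a value of $C_{\P}$ is known. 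One small correction to your closing aside: with the one-sided condition $u(a)=0$ (and nothing imposed at $b$) the optimal constant is $C_{\P}=2/\pi$, coming from the first eigenvalue $\left(\pi/(2k)\right)^2$ of $-\ddot u=\lambda u$ with $u(a)=0$, $\dot u(b)=0$; the value $1/\pi$ you quote corresponds to Dirichlet conditions at both endpoints. This does not affect your argument, since any valid constant suffices for the subsequent analysis.
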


\begin{proof}
This result is a direct consequence of the standard Poincar\'e inequality in $H^{1}(I;\R)$; (see \cite{braess2001finite} or \cite{evans2010partial}, for example).
\end{proof}
Let us further introduce the following set 
\begin{equation}
\label{eq:set1}
X_a^{r_{n}}:=\{x\in  \PP^{r_n}(I_{n};\R^{d})|x(t_{n-1})=a\}
\end{equation}
and notice that Poincar\'e's inequality \eqref{eq:poincare} holds true for all $x\in X_0^{r_n}$, i.e. we have
\[
\norm{x}_{L^{2}(I_{n};\R^d)}\leq k_{n}C_{\P}\norm{\dot{x}}_{L^{2}(I_{n};\R^d)}, \quad k_{n}=t_{n}-t_{n-1}. 
\] 
The following result addresses the invertibility of the operator $X_0^{r_{n}}\ni x\mapsto \G_{\A}(t,u)x$.
\begin{lemma}
\label{lemma:test}
Let $u \in \PP^{r_n}(I_n;\R^{d}), t \in (0,T]$. 
Then, if $ k_{n}<(C_{\A}C_{\P})^{-1}$ the operator
\[
\G_{\A}(t,u):X_0^{r_n}\rightarrow  \PP^{r_{n}-1}(I_n;\R^{d})' 
\]
is invertible on $\G_{\A}(t,u)(X_0^{r_n})\subset \PP^{r_{n}-1}(I_n;\R^{d})'$.
\end{lemma}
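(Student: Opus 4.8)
The plan is to establish injectivity of the linear map $\G_{\A}(t,u)$ on $X_0^{r_n}$; since an injective linear operator admits a well-defined inverse on its range, this is precisely the asserted invertibility on $\G_{\A}(t,u)(X_0^{r_n})$. I note in passing that because $\dim X_0^{r_n}=r_n d=\dim \PP^{r_n-1}(I_n;\R^d)'$, injectivity would in fact upgrade to a bijection onto the full dual, but since the statement only demands invertibility on the image, proving the kernel is trivial suffices.

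To this end I would take $\delta\in X_0^{r_n}$ in the kernel, i.e.\ assume $\left\langle \G_{\A}(t,u)\delta,\varphi\right\rangle=0$ for every $\varphi\in\PP^{r_n-1}(I_n;\R^d)$, which unpacks to
\[
\int_{I_n}(\dot\delta(t),\varphi(t))\,\mathrm{d}t=\int_{I_n}(\A(t,u(t))\delta(t),\varphi(t))\,\mathrm{d}t,\qquad\forall\,\varphi.
\]
The decisive observation---and the crux of the whole argument---is that $\delta\in\PP^{r_n}(I_n;\R^d)$ forces $\dot\delta\in\PP^{r_n-1}(I_n;\R^d)$, so that $\dot\delta$ is itself an admissible test function. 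Choosing $\varphi=\dot\delta$ turns the left-hand side into the energy $\norm{\dot\delta}_{L^2(I_n;\R^d)}^2$.

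It then remains to bound the right-hand side. Cauchy--Schwarz in $L^2$ together with the pointwise growth bound \eqref{eq:CA} on $\A$ gives
\[
\abs{\int_{I_n}(\A(t,u(t))\delta(t),\dot\delta(t))\,\mathrm{d}t}\leq C_{\A}\norm{\delta}_{L^2(I_n;\R^d)}\norm{\dot\delta}_{L^2(I_n;\R^d)},
\]
and the Poincar\'e inequality \eqref{eq:poincare}---applicable precisely because $\delta(t_{n-1})=0$, i.e.\ $\delta\in X_0^{r_n}$---replaces $\norm{\delta}_{L^2(I_n;\R^d)}$ by $k_nC_{\P}\norm{\dot\delta}_{L^2(I_n;\R^d)}$. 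Combining the two displays yields $\norm{\dot\delta}_{L^2(I_n;\R^d)}^2\leq C_{\A}C_{\P}k_n\norm{\dot\delta}_{L^2(I_n;\R^d)}^2$, that is $(1-C_{\A}C_{\P}k_n)\norm{\dot\delta}_{L^2(I_n;\R^d)}^2\leq 0$. Invoking the hypothesis $k_n<(C_{\A}C_{\P})^{-1}$ makes the factor $1-C_{\A}C_{\P}k_n$ strictly positive, forcing $\dot\delta=0$; being a polynomial with vanishing derivative, $\delta$ is constant, and $\delta(t_{n-1})=0$ then gives $\delta\equiv 0$.

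The main obstacle is conceptual rather than computational: recognizing that the derivative of a degree-$r_n$ trial function is an admissible degree-$(r_n-1)$ test function, which is exactly what lets the problem close into a coercivity estimate. Everything thereafter is a routine chain of Cauchy--Schwarz and Poincar\'e, with the smallness condition on $k_n$ entering solely to keep the coercivity constant positive---and, notably, making the resulting threshold independent of the approximation order $r_n$.
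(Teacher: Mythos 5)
Your proof is correct and follows essentially the same route as the paper: test the kernel equation with $\varphi=\dot\delta\in\PP^{r_n-1}(I_n;\R^d)$, bound the resulting term via Cauchy--Schwarz, the growth bound \eqref{eq:CA}, and the Poincar\'e inequality, and use $k_n<(C_{\A}C_{\P})^{-1}$ to force $\dot\delta=0$ and hence $\delta\equiv 0$. If anything, your direct formulation $(1-C_{\A}C_{\P}k_n)\norm{\dot\delta}^2_{L^2(I_n;\R^d)}\leq 0$ is slightly cleaner than the paper's argument by contradiction, which tacitly assumes $\dot x\neq 0$ before dividing.
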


\begin{proof}
Since the linear map $x\mapsto \G_{\A}(t,u)x$ operates over a finite dimensional space, we show that its kernel is trivial. Suppose there exists $x\in X_0^{r_n}\setminus\{0\}$ such that 
\[
\left\langle \G_{\A}(t,u)x,\varphi \right\rangle=\int_{I_{n}}{(\dot{x}(t),\varphi(t))\mathrm{d} t}-\int_{I_{n}}{(\A(t,u(t))x(t),\varphi(t))\mathrm{d} t}=0,
\]
holds $\forall \varphi \in \mathcal{P}^{r_n-1}(I_n;\R^{d})$.
Choosing $\varphi=\dot{x}\in \PP^{r_n-1}(I_n;\R^{d})$, we conclude
\[
\int_{I_{n}}{\norm{\dot{x}}^{2}\mathrm{d}t}=\int_{I_{n}}{(\A(t,u)x,\dot{x})\mathrm{d}t}.
\]
Employing the Cauchy-Schwarz inequality we get
\[
\begin{aligned}
\int_{I_{n}}{\norm{\dot{x}}^{2}\mathrm{d}t}&=\int_{I_{n}}{(\A(t,u)x,\dot{x})\mathrm{d}t}\\
&\leq \int_{I_{n}}{\abs{(\A(t,u)x,\dot{x})}\mathrm{d}t}\\
&\leq \int_{I_{n}}{\norm{\A(t,u)x}\norm{\dot{x}}\mathrm{d}t}\\
&\leq C_{\A} \int_{I_{n}}{\norm{x}\norm{\dot{x}}\mathrm{d}t}\\
&\leq C_{\A} \left(\int_{I_{n}}{\norm{x}^2\mathrm{d}t}\right)^{\nicefrac{1}{2}}\left(\int_{I_{n}}{\norm{\dot{x}}^2\mathrm{d}t}\right)^{\nicefrac{1}{2}}.
\end{aligned}
\]
Hence, the above estimate implies
\[
\int_{I_{n}}{\norm{\dot{x}}^{2}\mathrm{d}t}\leq C_{\A}^2 \int_{I_{n}}{\norm{x}^2\mathrm{d}t}.
\]
Since $x\in X_0^{r_{n}}$ we can invoke the Poincar\'e inequality and obtain, together with our assumption $(C_{\P}C_{\A})^{-1}>k_{n}$, the following contradiction
\[
\norm{\dot{x}}^{2}_{L^{2}(I_{n};\R^d)}\leq C_{\A}^2k_{n}^2C_{\P}^2\norm{\dot{x}}_{L^2(I_n;\R^d)}^2<\norm{\dot{x}}_{L^2(I_n;\R^d)}^2.
\]
\end{proof}

\begin{remark}
Note that if $\A\equiv 0$ the operator $\G_{\A}(t,u)$ is invertible without any condition on $k_n$.
Indeed, let $x\in X_0^{r_{n}}$ with
\[
0=\left\langle \G_{\A}(t,u)x(t),\varphi(t) \right\rangle, \quad  \forall \varphi \in \PP^{r_{n}-1}(I_n;\R^d),
\]
i.e. for $ \dot{x}=\varphi$ there holds
\[
\int_{I_{n}}{\norm{\dot{x}}^{2}\mathrm{d}t}=0,
\]
which implies $\norm{x}=0$ and therefore $x=0$.
\end{remark}
Next, for $u\in \mathcal{P}^{r_{n}}(I_n;\R^d)$ we introduce the following map 
\begin{equation}
\label{eq:fixed-point-map}
g: X_a^{r_{n}} \rightarrow  X_a^{r_{n}}
\end{equation}
defined by
\[
-\left\langle \G_{\A}(t,u)(g(x)-x),\varphi \right\rangle=\left\langle \G(t,x),\varphi \right\rangle \quad \forall \varphi \in \PP^{r_{n}-1}(I_n;\R^d).
\]
Notice that $g$ is well defined by the above Lemma \ref{lemma:test}, i.e.
\[
g(x)=x-[\G_{\A}(t,u)]^{-1}\G(t,x) \in X_{a}^{r_{n}}.
\]
The next result shows that $g$ is a Lipschitz continuous map. 
\begin{lemma}
If $(C_{\A}C_{\P})^{-1}>k_n$, then there holds
\begin{equation}
\label{eq:final}
\norm{g(x)-g(y)}_{L^2(I_{n};\R^n)}\leq \frac{C_{\A}+L}{(k_{n}C_{\P})^{-1}-C_{\A}}\norm{x-y}_{L^2(I_{n};\R^n)}, \quad x,y \in X_{a}^{r_{n}}.
\end{equation}
\end{lemma}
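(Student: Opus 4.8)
The plan is to set $w:=g(x)-g(y)$ and to test the difference of the two defining relations for $g$ against $\varphi=\dot w$, reducing everything to an absorption argument driven by the Poincar\'e inequality. The first observation I would record is structural: since $x,y\in X_a^{r_n}$, both images satisfy $g(x)(t_{n-1})=g(y)(t_{n-1})=a$, so $w\in X_0^{r_n}$. This is precisely what licenses the use of \eqref{eq:poincare} on $w$ later, and it also guarantees that $\dot w\in\PP^{r_n-1}(I_n;\R^d)$ is an admissible test function.

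Next I would subtract the two identities
\[
-\left\langle \G_{\A}(t,u)(g(x)-x),\varphi \right\rangle=\left\langle \G(t,x),\varphi \right\rangle, \qquad -\left\langle \G_{\A}(t,u)(g(y)-y),\varphi \right\rangle=\left\langle \G(t,y),\varphi \right\rangle,
\]
and invoke linearity of $\G_{\A}(t,u)$ to obtain
\[
-\left\langle \G_{\A}(t,u)w,\varphi \right\rangle=\left\langle \G(t,x)-\G(t,y),\varphi \right\rangle-\left\langle \G_{\A}(t,u)(x-y),\varphi \right\rangle.
\]
The key algebraic point here is that the $\int_{I_n}(\dot x-\dot y,\varphi)\,\mathrm{d}t$ contributions coming from $\G(t,x)-\G(t,y)$ and from $\G_{\A}(t,u)(x-y)$ are identical and therefore cancel, so the right-hand side collapses to $\int_{I_n}\bigl(\A(t,u)(x-y)-(\F(t,x)-\F(t,y)),\varphi\bigr)\,\mathrm{d}t$, containing no derivatives of $x$ or $y$.

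Then I would take $\varphi=\dot w$. Expanding the left-hand side via the definition of $\G_{\A}$ and rearranging gives
\[
\norm{\dot w}_{L^2(I_n;\R^d)}^2=\int_{I_n}(\A(t,u)w,\dot w)\,\mathrm{d}t-\int_{I_n}\bigl(\A(t,u)(x-y)-(\F(t,x)-\F(t,y)),\dot w\bigr)\,\mathrm{d}t.
\]
Using Cauchy--Schwarz together with \eqref{eq:CA}, the first term is bounded by $C_{\A}\norm{w}_{L^2(I_n;\R^d)}\norm{\dot w}_{L^2(I_n;\R^d)}$, and applying the Poincar\'e inequality $\norm{w}_{L^2(I_n;\R^d)}\le k_nC_{\P}\norm{\dot w}_{L^2(I_n;\R^d)}$ turns this into $k_nC_{\P}C_{\A}\norm{\dot w}_{L^2(I_n;\R^d)}^2$. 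For the second term, \eqref{eq:CA} controls the $\A$-part and the Lipschitz bound \eqref{eq:Lipschitz} controls the $\F$-part, yielding the estimate $(C_{\A}+L)\norm{x-y}_{L^2(I_n;\R^d)}\norm{\dot w}_{L^2(I_n;\R^d)}$.

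Finally I would divide through by $\norm{\dot w}_{L^2(I_n;\R^d)}$ (the degenerate case $\dot w=0$ forcing $w=0$ by Poincar\'e, so the claim is trivial there) to get $(1-k_nC_{\P}C_{\A})\norm{\dot w}_{L^2(I_n;\R^d)}\le (C_{\A}+L)\norm{x-y}_{L^2(I_n;\R^d)}$. The hypothesis $k_n<(C_{\A}C_{\P})^{-1}$ makes the factor $1-k_nC_{\P}C_{\A}$ strictly positive, so I may solve for $\norm{\dot w}_{L^2(I_n;\R^d)}$ and then apply the Poincar\'e inequality a second time to pass from $\norm{\dot w}_{L^2(I_n;\R^d)}$ to $\norm{w}_{L^2(I_n;\R^d)}=\norm{g(x)-g(y)}_{L^2(I_n;\R^d)}$; dividing numerator and denominator by $k_nC_{\P}$ recovers exactly the constant $(C_{\A}+L)/((k_nC_{\P})^{-1}-C_{\A})$ in \eqref{eq:final}. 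The only genuinely delicate step is the cancellation of the derivative terms in the subtraction, which is what reduces the whole computation to a clean Poincar\'e-based absorption; the remaining estimates are routine Cauchy--Schwarz applications.
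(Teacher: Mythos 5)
Your proposal is correct and follows essentially the same route as the paper: set $s=g(x)-g(y)\in X_0^{r_n}$, test the subtracted defining relations with $\dot s$ so the derivative terms of $x-y$ cancel, estimate via Cauchy--Schwarz, \eqref{eq:CA} and \eqref{eq:Lipschitz}, and close with the Poincar\'e inequality; the only cosmetic difference is that you absorb the $\A s$ term into $\norm{\dot s}^2$ before converting back to $\norm{s}$, whereas the paper keeps $\norm{s}$ and lower-bounds $\norm{\dot s}$ by $(k_nC_{\P})^{-1}\norm{s}$, which is algebraically equivalent and yields the same constant. Your explicit treatment of the degenerate case $\dot s=0$ is a small but welcome addition.
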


\begin{proof}
Let $x,y\in X_{a}^{r_{n}}$, set $s:=g(x)-g(y) \in X_{0}^{r_{n}}$ and notice that $\dot{s}\in \PP^{r_{n}-1}(I_n;\R^d)$. By definition of $g$ there holds
\[
\left\langle \G_{\A}(t,u)s,\dot{s} \right\rangle=\left\langle \G_{\A}(t,u)(x-y),\dot{s} \right\rangle-\left\langle \G(t,x)-\G(t,y),\dot{s} \right\rangle
\]
with
\[
\begin{aligned}
&\left\langle \G_{\A}(t,u)(x-y),\dot{s} \right\rangle-\left\langle \G(t,x)-\G(t,y),\dot{s} \right\rangle\\
=&\int_{I_{n}}{(\F(t,x)-\F(t,y),\dot{s})\mathrm{d}t}-\int_{I_{n}}{(\A(t,u)(x-y),\dot{s})\mathrm{d}t},
\end{aligned}
\]
and
\[
\left\langle \G_{\A}(t,u)s,\dot{s} \right\rangle=\int_{I_{n}}{\norm{\dot{s}}^2\mathrm{d}t}-\int_{I_{n}}{(\A(t,u)s,\dot{s})\mathrm{d}t}.
\]
Hence we arrive at
\begin{equation}
\label{eq:12}
\int_{I_{n}}{\norm{\dot{s}}^2\mathrm{d}t}=\int_{I_{n}}{(\A(t,u)(s-(x-y)),\dot{s})\mathrm{d}t}+\int_{I_{n}}{(\F(t,x)-\F(t,y),\dot{s})\mathrm{d}t}.
\end{equation}
By \eqref{eq:12} and the Cauchy-Schwarz inequality we obtain 
\[
\begin{aligned}
\int_{I_{n}}{\norm{\dot{s}}^2\mathrm{d}t}\leq & C_{\A}\int_{I_{n}}{\norm{s-(x-y)	}\norm{\dot{s}}\mathrm{d}t}+L\int_{I_{n}}{\norm{x-y}{\norm{\dot{s}}}\mathrm{d}t}\\
\leq & C_{\A}\int_{I_{n}}{\norm{s}\norm{\dot{s}}\mathrm{d}t}+(C_{\A}+L)\int_{I_{n}}{\norm{x-y}{\norm{\dot{s}}}\mathrm{d}t}\\
\leq &\left( C_{\A}\left(\int_{I_{n}}{\norm{s}}^2\mathrm{d}t\right)^{\nicefrac{1}{2}}+(C_{\A}+L)\left(\int_{I_{n}}{\norm{x-y}^2\mathrm{d}t}\right)^{\nicefrac{1}{2}}\right)\left(\int_{I_{n}}{\norm{\dot{s}}}^2\mathrm{d}t\right)^{\nicefrac{1}{2}},
\end{aligned}
\]
i.e., there holds
\[
\norm{\dot{s}}_{L^{2}(I_{n};\R^d)}\leq C_{\A}\norm{s}_{L^{2}(I_{n};\R^d)}+(C_{\A}+L)\norm{x-y}_{L^2(I_n;\R^d)}.
\]
Using that $s\in X^{r_{n}}$ and invoking again the Poincar\'e inequality we end up with
\begin{equation}
\label{eq:22}
(k_n C_{\P})^{-1}\norm{s}_{L^{2}(I_{n};\R^d)}\leq  C_{\A}\norm{s}_{L^{2}(I_{n};\R^d)}+(C_{\A}+L)\norm{x-y}_{L^2(I_n;\R^d)}.
\end{equation}
Solving \eqref{eq:22} for $\norm{\dot{s}}_{L^{2}(I_n,\mathbb{R}^d)}$ by using the assumption $(k_n C_{\P})^{-1}>C_{\A}$, we obtain \eqref{eq:final}.
\end{proof}

\begin{remark}
Notice that for $\A\equiv 0$, estimate \eqref{eq:final} is simply
\[
\norm{g(x)-g(y)}_{L^2(I_{n};\R^n)}\leq Lk_{n}C_{\P}\norm{x-y}_{L^2(I_{n};\R^n)}, \quad x,y \in \mathcal{P}^{r_n}(I_n,\mathbb{R}^{d}).
\] 
\end{remark}
Using the above results we are now ready to prove the main result of this note:
\begin{theorem}
\label{theo:main}
Suppose there holds 
\begin{equation}
\label{eq:main-assum}
k< \frac{1}{(2C_{\A}+L)C_{\P}},
\end{equation}
and assume further that $\F$ is Lipschitz continuous in the second variable with Lipschitz constant $L$. Then, the cG method \eqref{eq:2} admits a unique solution $\mathcal{U}\in \mathcal{S}^{\textbf{r},1}(\tau_{N};\R^{d})$.  
\end{theorem}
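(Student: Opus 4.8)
The plan is to reduce the global problem \eqref{eq:2} to a sequence of local fixed point problems on the subintervals $I_n$ and to solve each one by Banach's fixed point theorem applied to the map $g$ from \eqref{eq:fixed-point-map}. First I would recall that, since the test space is discontinuous across the nodes, \eqref{eq:2} decouples into the local problems \eqref{eq:3}; hence it suffices to construct, for each $n$ and each prescribed nodal value $a=\mathcal{U}(t_{n-1})$, a unique $\mathcal{U}|_{I_n}\in X_a^{r_n}$ satisfying $\langle\G(t,\mathcal{U}|_{I_n}),\varphi\rangle=0$ for all $\varphi\in\PP^{r_n-1}(I_n;\R^d)$, and then to glue these local solutions together.

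The key observation is that, since Lemma \ref{lemma:test} guarantees that $\G_{\A}(t,u)$ is invertible on $X_0^{r_n}$, a point $x\in X_a^{r_n}$ solves the local problem if and only if it is a fixed point of $g$: indeed $g(x)=x-[\G_{\A}(t,u)]^{-1}\G(t,x)$, so $g(x)=x$ is equivalent to $\G(t,x)=0$ in $\PP^{r_n-1}(I_n;\R^d)'$, independently of the frozen argument $u$. The iteration \eqref{eq:linearized} is precisely the Picard iteration $\mathcal{U}_{j+1}^n=g(\mathcal{U}_j^n)$ for this $g$, so convergence of the scheme and existence of the local solution will follow from the very same contraction argument.

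Next I would verify the hypotheses of Banach's theorem. The assumption \eqref{eq:main-assum} gives $k_n\le k<\frac{1}{(2C_{\A}+L)C_{\P}}\le (C_{\A}C_{\P})^{-1}$, so Lemma \ref{lemma:test} applies, $g$ is well defined and maps $X_a^{r_n}$ into itself, and the Lipschitz estimate \eqref{eq:final} holds. A short computation shows that the Lipschitz constant in \eqref{eq:final} is strictly less than one exactly when $k_n<\frac{1}{(2C_{\A}+L)C_{\P}}$, which is guaranteed by \eqref{eq:main-assum}; this is the step in which the precise form of the time step restriction is consumed. Since $X_a^{r_n}$ is a nonempty affine subspace of the finite dimensional space $\PP^{r_n}(I_n;\R^d)$, it is closed, hence complete, as a metric space under $\norm{\cdot}_{L^2(I_n;\R^d)}$, so Banach's fixed point theorem yields a unique fixed point of $g$, i.e. a unique local solution, and the iterates $\mathcal{U}_j^n$ converge to it.

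Finally I would assemble the global solution by induction over $n$: starting from $\mathcal{U}(0)=u_0$, the unique local solution on $I_n$ furnishes the nodal value $\mathcal{U}(t_n)$ used to initialize $I_{n+1}$; the matching constraint built into the definition of $X_a^{r_n}$ enforces continuity across each node, so the assembled function lies in $H^1(I;\R^d)$ and is piecewise polynomial of the prescribed degrees, i.e. $\mathcal{U}\in\mathcal{S}^{\textbf{r},1}(\tau_N;\R^d)$. Uniqueness of $\mathcal{U}$ then follows from the uniqueness of each local fixed point. I expect the only delicate points to be essentially bookkeeping—checking that the single global inequality \eqref{eq:main-assum} indeed forces both the invertibility condition of Lemma \ref{lemma:test} and the contraction condition on every subinterval, and confirming that fixed points of the frozen map $g$ are genuine solutions of \eqref{eq:3}—rather than any substantial analytic difficulty, since the contraction estimate itself has already been established.
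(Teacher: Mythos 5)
Your proposal is correct and follows essentially the same route as the paper's own proof: reduce \eqref{eq:2} to the decoupled local problems, invoke Lemma \ref{lemma:test} to make $g$ well defined, verify that the Lipschitz constant $\frac{C_{\A}+L}{(k_nC_{\P})^{-1}-C_{\A}}$ in \eqref{eq:final} drops below one precisely under \eqref{eq:main-assum}, and apply Banach's fixed point theorem on each $X_a^{r_n}$, advancing node by node. Your write-up is in fact somewhat more complete than the paper's, since you make explicit the equivalence between fixed points of $g$ and zeros of $\G$, the completeness of $X_a^{r_n}$, and the inductive assembly of the global solution, all of which the paper leaves implicit.
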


\begin{proof}
On each interval $I_{n}$ there holds by assumption \eqref{eq:main-assum} 
\[
k_{n}\leq k< \frac{1}{(2C_{\A}+L)C_{\P}}<\frac{1}{C_\A C_{\P}}.
\] 
Therefore, for each time step the map $g:X_{a}^{r_{n}}\rightarrow X_{a}^{r_{n}}$ with $a=\mathcal{U}(t_{n-1})$ is well defined by Lemma \ref{lemma:test}. In addition, for each time step the map $g$ is a contraction on $X_a^{r_n}$ by \eqref{eq:main-assum}, i.e. there exists a unique $\mathcal{U} \in X_{a}^{r_{n}}$ with $g(\mathcal{U})=\mathcal{U}$ which is the desired zero for $\G(t,\mathcal{U})$.
\end{proof}

\begin{remark}
This solution can be obtained iteratively by employing the iterative scheme from \eqref{eq:linearized}.
\end{remark}


\section{Numerical Experiments}
\label{sec:4}

In this Section we show some numerical experiments illustrating the theoretical convergence results from Section \ref{sec:3}. In doing so we provide some apriori error results given in \cite{Wihler:05}. As a preparation, we need to define the number of degrees of freedom DOF for the cG method, which is simply the sum of all DOF's on each subinterval $I_{n}$ that are needed to compute the numerical solution $\mathcal{U}$ on $I$. Thus 
\[
\text{DOF}=\text{dim}\left(\mathcal{S}^{\textbf{r}-1,0}(\tau_{N},\mathbb{R}^{d})\right)=d\sum_{n=1}^{N}{r_{n}}.
\]
Here we fix the polynomial degree, i.e. we set $r_{n}=r$ and therefore $\text{DOF}=d\cdot N \cdot r$. In addition, we assume that the solution $u$ of \eqref{eq:initial} belongs to $H^{s+1}(I,\mathbb{R}^{d}), s\geq 0$, and that the partition of $I$ is quasi-uniform. Then the apriori error results with respect to the $L^2$-norm is given in \cite{Wihler:05} through 
\begin{equation}
\label{eq:aprioriL2}
\norm{u-\mathcal{U}}_{L^{2}(I,\mathbb{R}^{d})}\leq C \text{DOF}^{-\min{(s,r)}-1}.
\end{equation}
The constant $C$ depends on $L,T$ and is independent of $r$ and $\mathcal{T}_{N}$.
Let us further point out the following two aspects arising from the apriori error result \eqref{eq:aprioriL2}:
\subsubsection*{h-version}	The h-version of the cG scheme means that convergence is implied by increasing the number of time steps at a fixed approximation order $r_n=r\geq 1$ on each interval $I_n$. In case that the solution $u$ of \eqref{eq:initial} is analytic and therefore  $s$ is large, the error estimate \eqref{eq:aprioriL2} then reads
\[
\norm{u-\mathcal{U}}_{L^{2}(I,\mathbb{R}^{d})}\leq C \text{DOF}^{-r-1}.
\] 
For $h\to 0$ or equivalently $N\to \infty$, we see that the rate of convergence with respect to the $L^2$-norm is $r+1$.
\subsubsection*{p-version} For the $p$-version of the cG method we keep the time partition fixed but let the approximation order $r_{n}$ be variable. Thus, convergence is obtained by increasing the approximation order. In addition, it can be shown that for analytic solutions the $p$-version admits high order convergence (even exponentially with respect to $r$). We refer here to \cite{Wihler:05} and \cite{schwab1998p} for further details.
\begin{example}
\label{ex:1}

The first example is given by the initial value problem
\begin{equation}
\label{eq:ex1}
\begin{cases}
\dot{u}(t)&=-\sin(u(t)), t\in (0,1],\\
u(0)&=\frac{\pi}{2}.
\end{cases}
\end{equation} 
The exact solution is given by $u(t)=2\arctan(\mathrm{e} ^{-t})$. We test the simplified Newton scheme, i.e. we consider \eqref{eq:linearized} using $\A \equiv \F_{u}$. Notice that $C_{\A}=L=1$ and therefore we can expect convergence by Theorem \ref{theo:main}. First we present the $h$-version of the cG method in Figure~\ref{fig:h-performance1}.The numerical test was obtained by bisection of the time interval $I=[0,1]$. In Figure~\ref{fig:h-performance1} we depict the decay of the $L^{2}$-error from where we can clearly see the convergence order $r+1$ according to the error estimate given in \eqref{eq:aprioriL2}. Table~\ref{tab:1} shows the $L^{2}$-error as well as the convergence order $r+1$ and the number of elements. In addition, Tables~\ref{tab:2} and \ref{tab:3} show also the number of iterations compared with the number of iterations when solving \eqref{eq:linearized} with $\A \equiv 0$ which is simply the standard Banach-fixed point iteration procedure. As it can be seen, the simplified Newton scheme needs a significant lower number of iterations in order to accomplish the $L^{2}$-error given in Table~\ref{tab:1}. Moreover, in Figure~\ref{fig:p-performance1} we depict the $p$-version of the cG method from where we observe that the convergence rates are even exponential. Here we choose a fixed partition of $[0,1]$ using $2,5,10,20$ and $50$ elements, i.e. we increase the approximation order on a fixed partition of $[0,1]$.   

\begin{table}[tbp]
\caption{Example 1: The performance data of the $h$-version of the cG method with use of the \emph{simplified} Newton method.}
\resizebox{\columnwidth}{!}{%
\begin{tabular}{c|c|c|c|c|c|c|c|c|c|c}
               \#el. & $r=1$  & Ord. & $r=2$ & Ord. & $r=3$  & Ord. & $r=4$ & Ord. & $r=5$ & Ord. \\
\hline
 1 &   2.94e-02 & - &  2.83e-3  & - & 4.51e-4 & - & 8.54e-6 & - & 6.78e-6 & - \\
 2 &   7.54e-03 & 1.97 & 4.39e-4 &  2.72 & 2.41e-5 & 4.23 & 1.30e-6 & 2.71 & 8.76e-8 & 6.27 \\
 4 &  1.90e-03 & 1.99 & 5.48e-5  & 2.97 &  1.51e-6  & 3.99 & 4.18e-8 & 4.96 & 1.34e-9 & 6.02 \\
 8 &  4.77e-04  & 2.00 & 6.89e-6 & 2.99 & 9.48e-8 & 4.00 & 1.31e-9 & 4.99 & 2.11e-11 & 5.99 \\
16 & 1.19e-04 & 2.00 &  8.62e-7 & 3.00 & 5.93e-9 & 4.00 & 4.10e-11 & 5.00 & 3.31e-14 & 6.00 \\
 32 & 2.98e-05  & 2.00 & 1.07e-7  & 3.00 & 3.70e-10 & 4.00 & 1.28e-12 & 5.00 & -- & -- \\
 64 & 7.46e-06 & 2.00 &  1.35e-8 & 3.00 & 2.31e-11 & 4.00 & 4.01e-14 & 5.00 & -- & -- \\
 128 & 1.86e-06 & 2.00 & 1.68e-9 & 3.00 & 1.45e-12 & 4.00 & -- & -- & -- & -- \\
 256 &  4.66e-07 & 2.00 & 2.10e-10 & 3.00 & 8.80e-14 & 4.00 & -- & -- & -- & -- \\
 512 & 1.17e-07 & 2.00 &  2.63e-11 & 3.00 & -- & -- & -- & -- & -- & -- \\
 1024 &  2.91e-08 & 2.00 & 3.29e-12 & 3.00	 & -- & -- & -- & -- & -- & -- \\
 2048 & 7.28e-09  & 2.00 & -- & -- & -- & -- & -- & -- & -- & -- \\
 4096 & 1.82e-09 & 2.00 & -- & -- & -- & -- & -- & -- & -- & -- \\
 8192 & 4.55e-10 & 2.00 & -- & -- & -- & -- & -- & -- & -- & -- 
\end{tabular}
}
\label{tab:1}
\end{table}

\vspace{0.3cm}

\begin{table}[tbp]
\caption{Example 1: The performance data of the $h$-version of the cG method including the number of iterations (with use of the \emph{simplified} Newton method).}
\resizebox{\columnwidth}{!}{%
\begin{tabular}{c|c|c|c|c|c|c|c|c|c|c}
               \#el. & $r=1$  & It. & $r=2$ & It. & $r=3$  & It. & $r=4$ & It. & $r=5$ & It. \\
\hline
 1 &   2.94e-02 & 27 & 2.83e-3 & 19 & 4.51e-4 & 17 & 8.54e-6  & 16 & 6.78e-6	 & 15 \\
 2 &   7.54e-03 & 28 & 4.39e-4 & 23 & 2.41e-5 & 22 & 1.30e-6 & 22 & 8.76e-8 & 21 \\
 4 &  1.90e-03 & 39 & 5.48e-5 & 35 & 1.51e-6 & 33 &  4.18e-8 & 34 & 1.34e-9 & 34 \\
 8 &  4.77e-04  & 62 & 6.89e-6 & 56 & 9.48e-8 & 56 & 1.31e-9 & 56 & 2.11e-11 & 56 \\
16 & 1.19e-04 & 96 & 8.62e-7 & 96 & 5.93e-9 & 96 & 4.10e-11 & 96 & 3.31e-14 & 96 \\
 32 & 2.98e-05  & 185 & 1.07e-7 & 177 & 3.70e-10 & 173 &  1.28e-12 & 173 & -- & -- \\
 64 & 7.46e-06 & 320 & 1.35e-8 & 320 & 2.31e-11 & 319 & 4.01e-14 & 319 & -- & -- \\
 128 & 1.86e-06 & 628 & 1.68e-9 & 624 & 1.45e-12  & 627 & - & -- & -- & -- \\
 256 &  4.66e-07 & 1024 & 2.10e-10 & 1024 & 8.80e-14 & 1024 & -- & -- & -- & -- \\
 512 & 1.17e-07 & 2048 & 2.63e-11 & 2047 & -- & -- & -- & -- & -- & -- \\
 1024 &  2.91e-08 & 4089 & 3.29e-12 & 4090 & -- & -- & -- & -- & -- & -- \\
 2048 & 7.28e-09  & 7970 & -- & -- & -- & -- & -- & -- & -- & -- \\
 4096 & 1.82e-09 & 12288 & -- & -- & -- & -- & -- & -- & -- & -- \\
 8192 & 4.55e-10 & 24579 & -- & -- & -- & -- & -- & -- & -- & -- 
\end{tabular}
}
\label{tab:2}
\end{table}

\begin{table}[tbp]
\caption{Example 1: The performance data of the $h$-version of the cG method including the number of iterations (with use of the Banach fixed point iteration procedure).}
\resizebox{\columnwidth}{!}{%
\begin{tabular}{c|c|c|c|c|c|c|c|c|c|c}
               \#el. & $r=1$  & It. & $r=2$ & It. & $r=3$  & It. & $r=4$ & It. & $r=5$ & It. \\
\hline
 1 &   2.94e-02 & 27 & 2.83e-3 & 19 & 4.51e-4 & 18 & 8.54e-6  & 16 & 6.78e-6	 & 15 \\
 2 &   7.54e-03 & 35 & 4.39e-4 & 28 & 2.41e-5 & 27 & 1.30e-6 & 24 & 8.76e-8 & 24 \\
 4 &  1.90e-03 & 51 & 5.48e-5 & 44 & 1.51e-6 & 45 &  4.18e-8 & 40 & 1.34e-9 & 40 \\
 8 &  4.77e-04  & 83 & 6.89e-6 & 74 & 9.48e-8 & 77 & 1.31e-9 & 69 & 2.11e-11 & 69 \\
16 & 1.19e-04 & 138 & 8.62e-7 & 127 & 5.93e-9 & 135 & 4.10e-11 & 120 & 3.31e-14 & 120 \\
 32 & 2.98e-05  & 239 & 1.07e-7 & 222 & 3.70e-10 & 242 &  1.28e-12 & 216 & -- & -- \\
 64 & 7.46e-06 & 425 & 1.35e-8 & 393 & 2.31e-11 & 440 & 4.01e-14 & 398 & -- & -- \\
 128 & 1.86e-06 & 753 & 1.68e-9 & 731 & 1.45e-12  & 848 & - & -- & -- & -- \\
 256 &  4.66e-07 & 1427 & 2.10e-10 & 1315 & 8.80e-14 & 1523 & -- & -- & -- & -- \\
 512 & 1.17e-07 & 2517 & 2.63e-11 & 2498 & -- & -- & -- & -- & -- & -- \\
 1024 &  2.91e-08 & 4903 & 3.29e-12 & 4800 & -- & -- & -- & -- & -- & -- \\
 2048 & 7.28e-09  & 8922 & -- & -- & -- & -- & -- & -- & -- & -- \\
 4096 & 1.82e-09 & 16316 & -- & -- & -- & -- & -- & -- & -- & -- \\
 8192 & 4.55e-10 & 32384 & -- & -- & -- & -- & -- & -- & -- & -- 
\end{tabular}
}
\label{tab:3}
\end{table}

\begin{figure}
\includegraphics[width=0.7\textwidth]{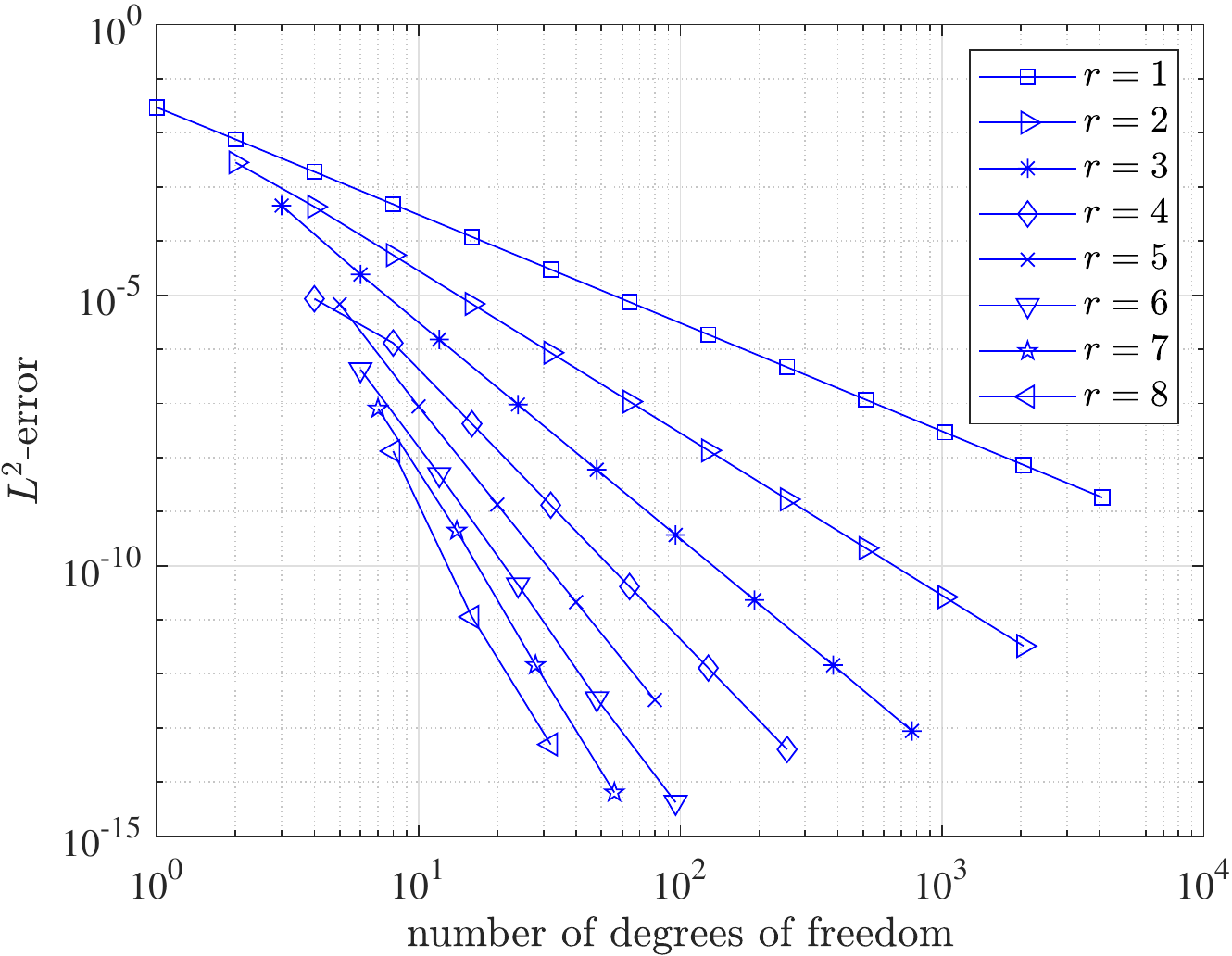}
\caption{Example~\ref{ex:1}: The $h$-version of the cG method (with use of the \emph{simplified} Newton method).}
\label{fig:h-performance1}
\end{figure}

\begin{figure}
\includegraphics[width=0.7\textwidth]{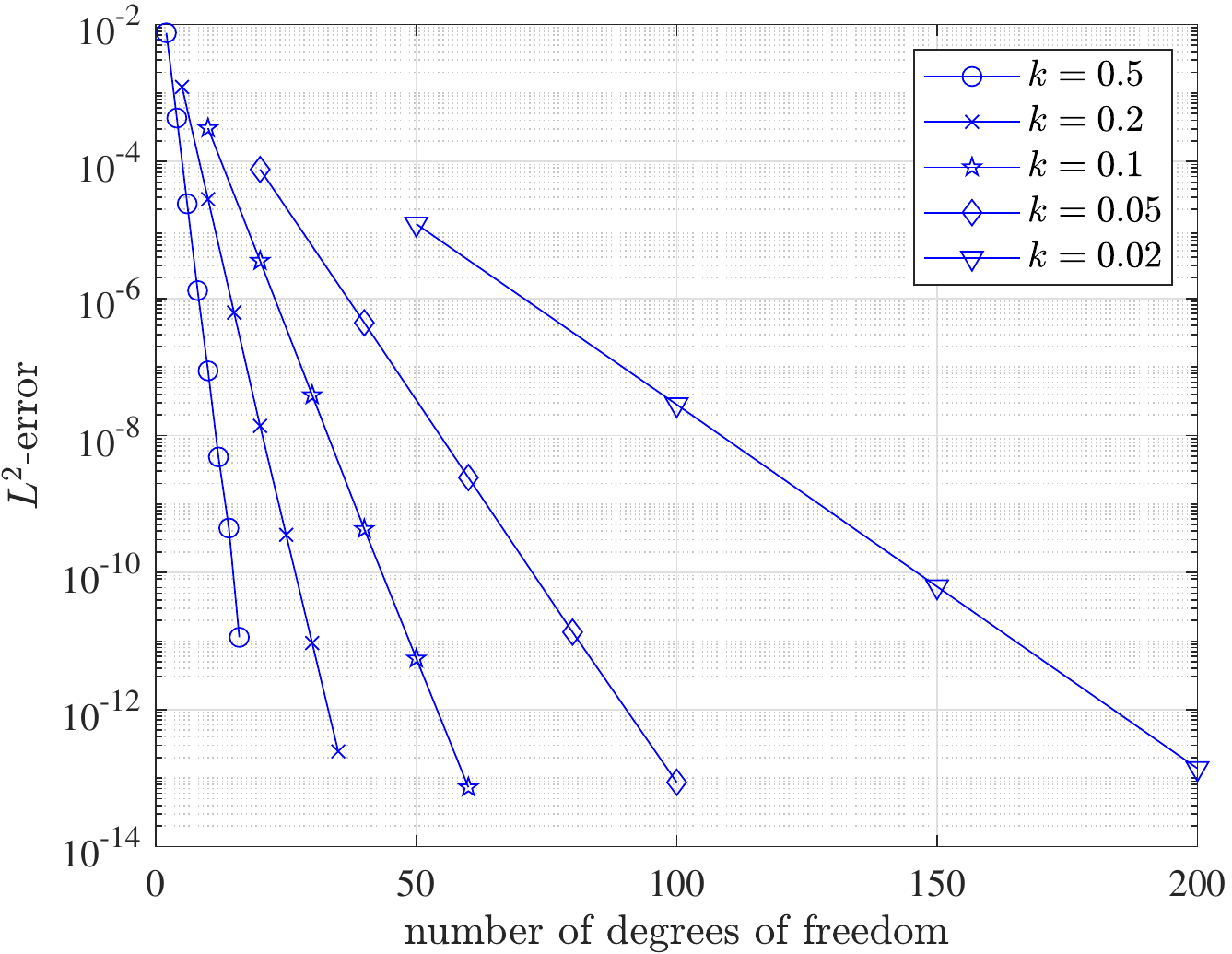}
\caption{Example~\ref{ex:1}: The $p$-version of the cG method (with use of the \emph{simplified} Newton method).}
\label{fig:p-performance1}
\end{figure}

\end{example}

\begin{example}
\label{ex:2}
The second example is given by
\begin{equation}
\label{eq:ex2}
\begin{cases}
\dot{u}(t)&=-2tu(t)^2, t\in (0,1],\\
u(0)&=1.
\end{cases}
\end{equation} 
The exact solution is given by $u(t)=\frac{1}{1+t^2}$. Notice, that \eqref{eq:Lipschitz} and \eqref{eq:CA} are not satisfied. However, we again depict in Figure~\ref{fig:h-performance2} 
the $h$-version of the cG method (again by bisection of the time interval $I=[0,1]$) from where we can clearly see the convergence order $r+1$according to the error estimate given in \eqref{eq:aprioriL2}. Furthermore, in Figure~\ref{fig:p-performance2} we see the $p$-version of the cG method from where we observe that the convergence rates are again exponential (again we choose a fixed partition of $[0,1]$ using $2,5,10,20$ and $50$ elements, i.e. we increase the approximation order on a fixed partition of $[0,1]$).

\begin{figure}
\includegraphics[width=0.7\textwidth]{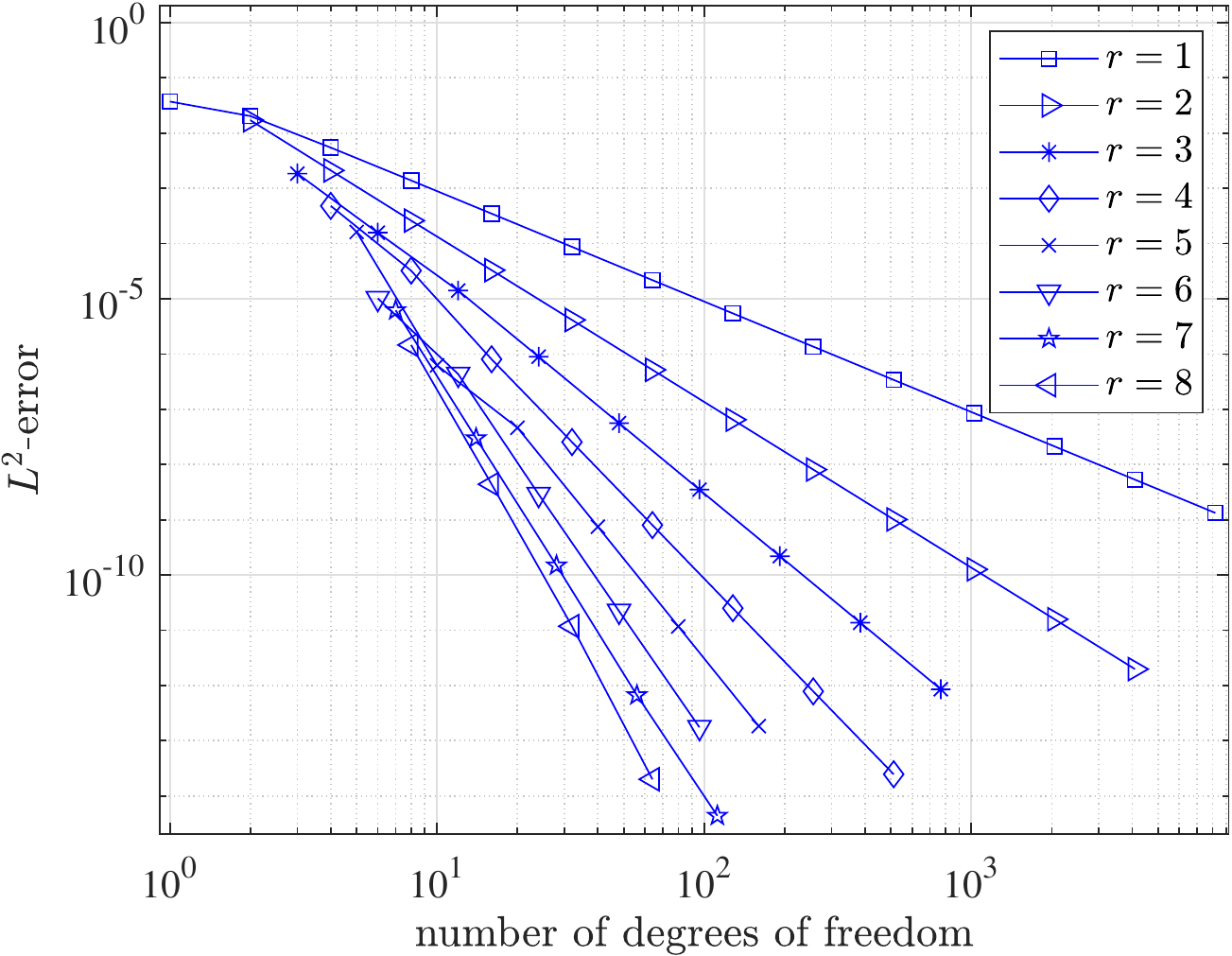}
\caption{Example~\ref{ex:2}: The $h$-version of the cG method (with use of the \emph{simplified} Newton method).}
\label{fig:h-performance2}
\end{figure}

\begin{figure}
\includegraphics[width=0.7\textwidth]{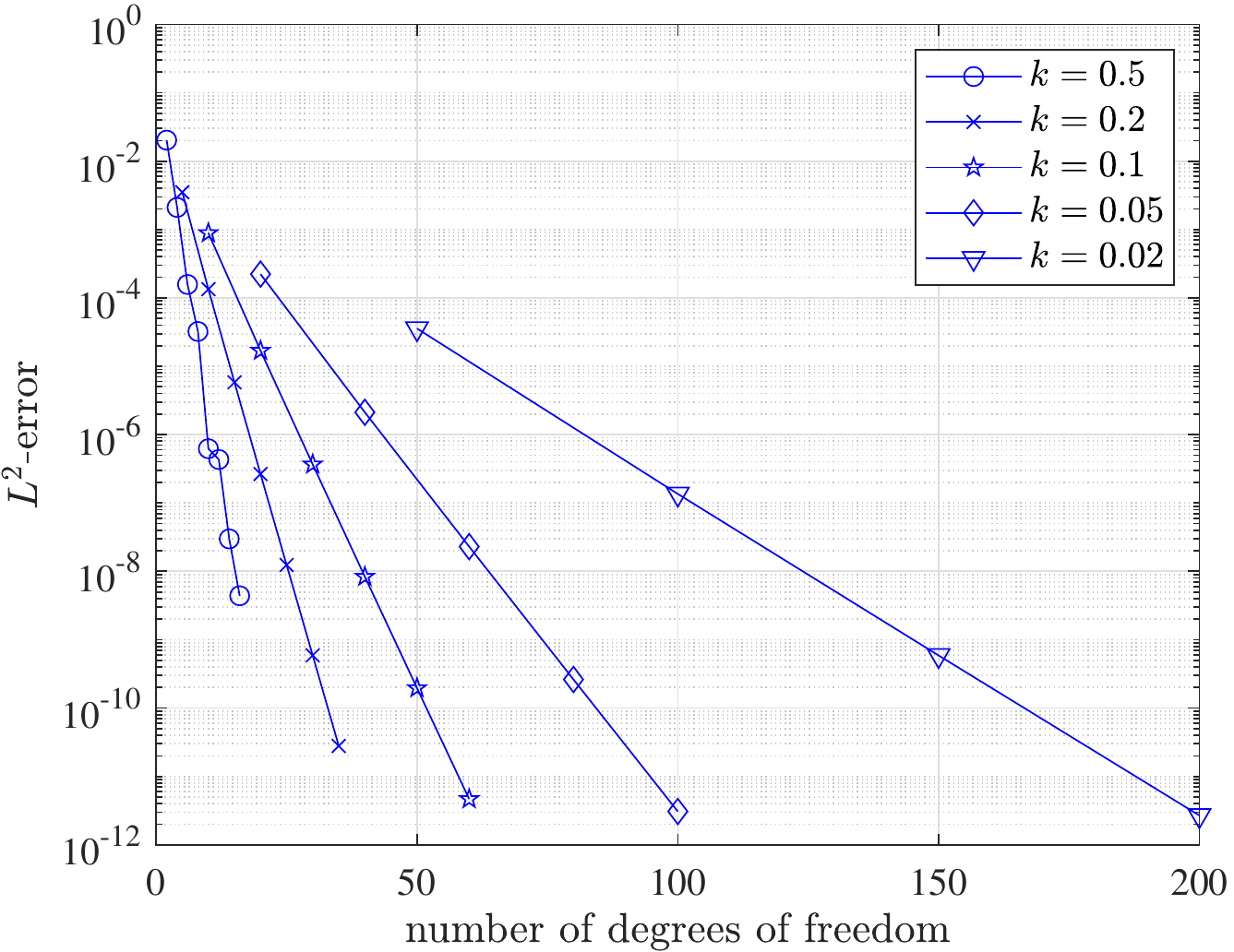}
\caption{Example~\ref{ex:2}: The $p$-version of the cG method (with use of the \emph{simplified} Newton method).}
\label{fig:p-performance2}
\end{figure}

\end{example}

\begin{example}
\label{ex:3}
We finally consider the following nonlinear initial value problems:

Given the initial data $u_0=(\nicefrac{\pi}{4},1)$ we seek $u=(u_{1},u_{2})$ such that for $t\in (0,1]$ there holds
\begin{equation}
\label{eq:system}
\begin{aligned}
\dot{u}_{1}(t)&=-\frac{u_2}{1+u_{2}^2}, \\
\dot{u}_{2}(t)&=-\tan(u_{1}).
\end{aligned}
\end{equation}
We use the exact solution $u(t)=(\arctan(\mathrm{e}^{-t}),\mathrm{e}^{-t})$ as reference solution. We remark that, although the assumptions \eqref{eq:Lipschitz} and \eqref{eq:CA} are again not necessarily satisfied for this problem, our approach still delivers good results as can be seen from the Figures~\ref{fig:h-performance_syst} and~\ref{fig:p-performance_syst}.

\begin{figure}
\includegraphics[width=0.7\textwidth]{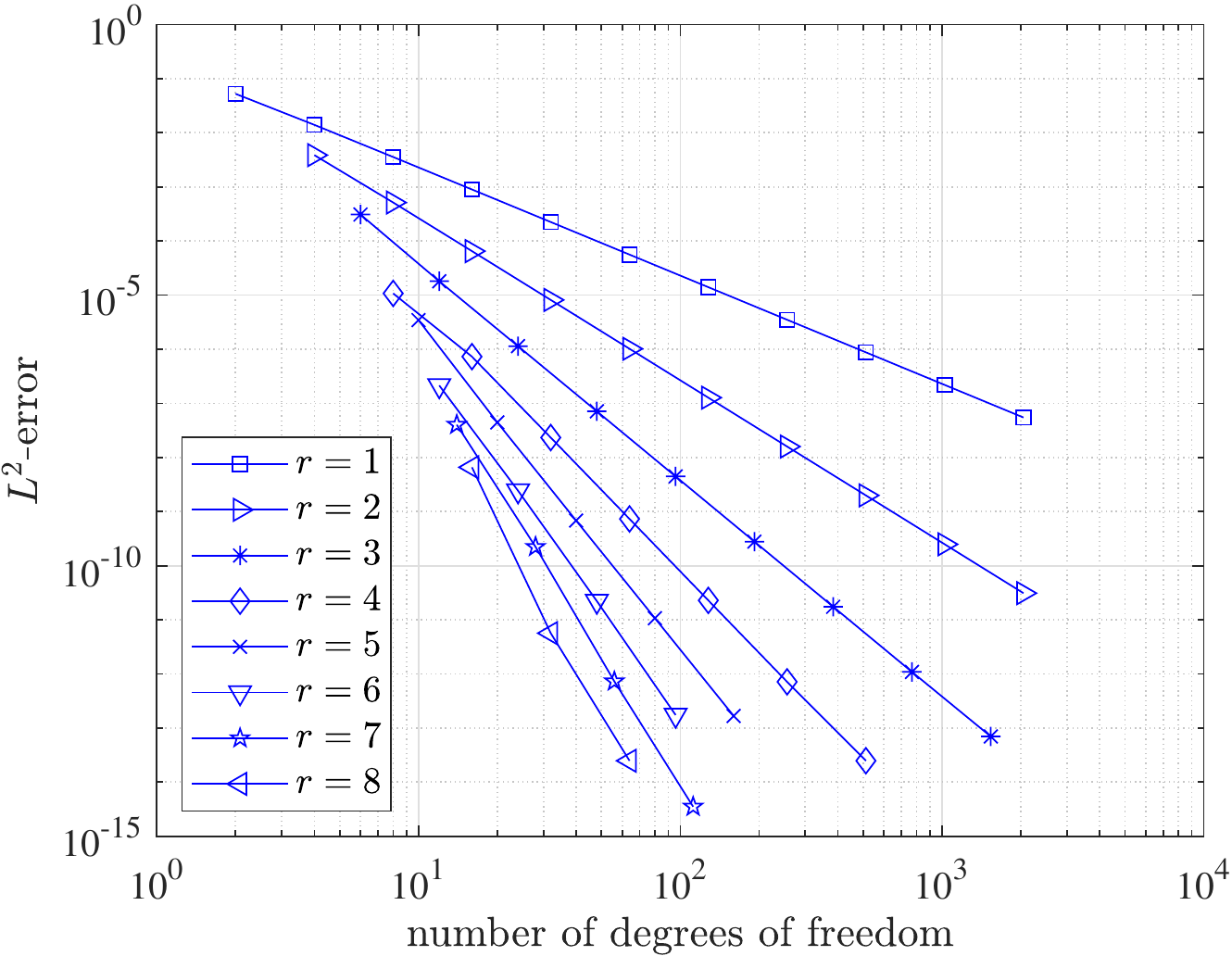}
\caption{Example~\ref{ex:3}: The $h$-version of the cG method (with use of the \emph{simplified} Newton method).}
\label{fig:h-performance_syst}
\end{figure}

\begin{figure}
\includegraphics[width=0.7\textwidth]{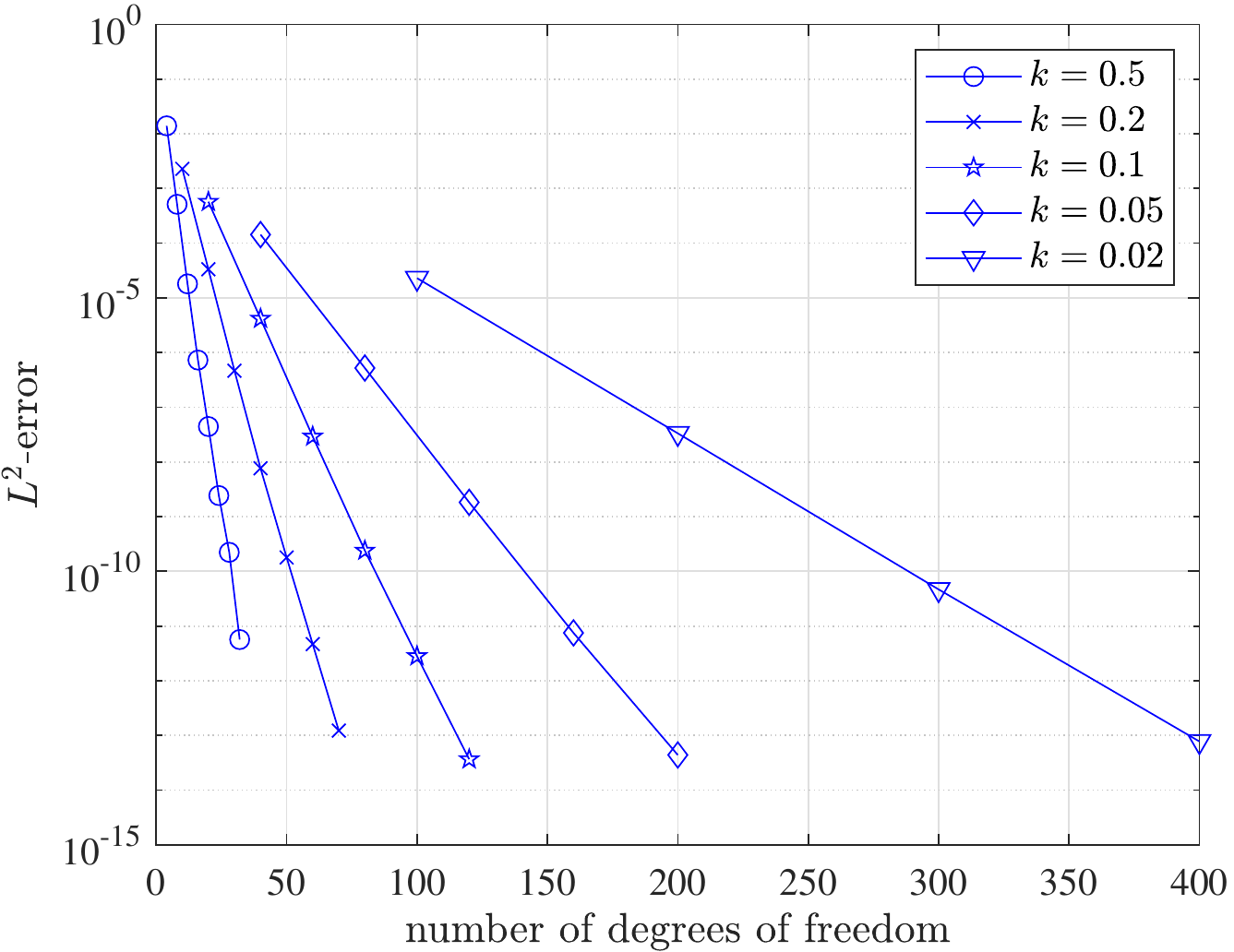}
\caption{Example~\ref{ex:3}: The $p$-version of the cG method (with use of the \emph{simplified} Newton method).}
\label{fig:p-performance_syst}
\end{figure}

\end{example}

\section{Conclusions}
\label{sec:concl}
The aim of this note was the numerical solution of initial value problems by means of the continuous Galerkin method for the discretization of the underlying nonlinear problem. We further have shown the existence of the discrete solution under reasonable assumptions. In addition, we have proved that for sufficiently small time steps, the linearized continuous Galerkin scheme admits a unique solution that can be obtained iteratively. Moreover, we have tested the proposed iteration scheme on a series of numerical examples. Our numerical experiments clearly illustrate the ability of our approach. Specifically, the \emph{simplified} Newton iteration scheme was able to significantly reduce the computational amount by means of the number of iterations.

\bibliographystyle{amsplain}
\bibliography{references}
\end{document}